\documentclass[12pt,a4paper]{article}
\usepackage[utf8]{inputenc}
\usepackage[russian,english]{babel}
\usepackage{amsmath, amssymb, amsthm}
\usepackage{geometry}
\usepackage{hyperref}
\newtheorem{theorem}{Theorem}
\newtheorem{definition}{Definition}

\newtheorem{corollary}{Corollary}
\newtheorem{remark}{Remark}

\title{Groups of Transverse Stratifications and the Hierarchy of Configuration Spaces}
\author{Manturov Vassily Olegovich \\ 
\small Moscow Institute of Physics and Technology (MIPT)}
\date{}

\begin{document}

\maketitle

\begin{abstract}
We introduce a broad family of groups $G(\Gamma)$ associated with transverse stratifications of moduli spaces. For a wide class of stratifications satisfying a natural local condition (transverse niceness), we construct a homomorphism from the fundamental group of the moduli space to the corresponding group $G(\Gamma)$. This general theorem, which is proved in the companion paper \cite{Manturov2026}, provides a far-reaching generalization of the $G_n^k$-theory. 

The main goal of the present paper is to discuss numerous examples and situations where the transverse niceness property actually occurs. We describe various moduli spaces and stratifications that are transversely nice. In particular, we study the hierarchy of configuration spaces of points in the plane defined by algebraic curves and establish the transversality property for conic configurations. The general hierarchy problem is formulated in the final section.
\end{abstract}

\subsection*{Keywords}
braid groups, transversality, stratification, moduli space, monodromy, $G_{n}^{k}$-groups.

\subsection*{AMS Subject Classification}
Primary 20F36; Secondary 57M60, 14H10, 32S60, 20F65.

\section{Introduction}

In the work \cite{Manturov2015}, the $G_n^k$-theory was developed around the following principle: 

{\em If a dynamical system describing the motion of $n$ particles admits a nice real codimension $1$-property depending on exactly $k$ particles, then the system admits invariants taking values in the group $G_n^k$.
}

The $G_n^k$-principle has since found numerous applications in topology, knot theory, and dynamical systems. Subsequent works \cite{Manturov2016, Manturov2019} have extended these ideas to more general settings, including braids on surfaces and configuration spaces with additional algebraic structures.

In \cite{Manturov2026}, we defined the groups $G(\Gamma)$ for an arbitrary hypergraph $\Gamma$ and proved the following general theorem (Theorem~\ref{thm:general} below): for any moduli space equipped with a stratification satisfying a certain local condition, there is a natural homomorphism from the fundamental group of the moduli space to $G(\Gamma)$. In the present paper, we coin the term \emph{transversely nice} for stratifications satisfying that local condition, and we prove that several natural stratifications are transversely nice.

The main results of this paper are as follows. We prove that the stratification of the configuration space of six points in the plane by the condition that they lie on a conic is transversely nice (Theorem~\ref{thm:conic}). This gives an explicit homomorphism from the fundamental group of the corresponding moduli space to the group $G_n^{6,3}$, which we define. We also discuss the hierarchy of similar stratifications defined by algebraic curves of higher degrees, and we formulate the general open problem.

Furthermore, we include in the general theorem (Theorem~\ref{thm:general}) the case of braids on the real projective plane $\mathbb{RP}^2$: the stratification by projective constraints (collinearity, conics, etc.) yields a natural homomorphism from the spherical braid group to the corresponding group $G(\Gamma)$.

We have discovered an even wider family of groups (so-called groups with quadratic relations), which will be defined and studied in a separate paper; it encompasses all the groups considered here and provides a unified treatment of the hierarchy of transversely nice stratifications.

\subsection*{Organization of the paper}

The paper is organized as follows.

In Section~2, we recall the necessary preliminaries: the definition of the groups $G_n^k$, the notion of transverse niceness, and the basic setup of configuration spaces of points in the plane.

In Section~3, we define the groups $G(\Gamma)$ for arbitrary hypergraphs and state the general theorem (Theorem~\ref{thm:general}) from \cite{Manturov2026}.

In Section~4, we specialize to the conic case: we define the group $G_n^{6,3}$ via the hypergraph $\Gamma$, list its defining relations, and prove that the conic stratification is transversely nice (Theorem~\ref{thm:conic}).

Finally, in Section~5, we discuss further directions and open problems, including the hierarchy of stratifications defined by curves of higher degrees and the extension of the theory to the projective plane.

\section{Preliminaries}
\label{sec:prelim}

We now recall the necessary definitions and fix the notation.

\subsection{The groups $G_n^k$}
\label{subsec:gnk}

For positive integers $n$ and $k$ with $k \le n$, the group $G_n^k$ is defined as follows \cite{Manturov2015}. Let $F_n$ be the free group on $n$ generators, and let $\operatorname{Aut}(F_n)$ be its automorphism group. Consider the set of all $k$-tuples of distinct elements of the set $\{x_1,\dots,x_n\}$ (where $x_i$ are the generators of $F_n$). The group $G_n^k$ is generated by symbols
\[
a_{i_1,\dots,i_k}, \qquad 1 \le i_1 < \dots < i_k \le n,
\]
subject to the following relations:
\begin{itemize}
\item $a_{i_1,\dots,i_k}^2 = 1$;
\item $a_{I} a_{J} = a_{J} a_{I}$ whenever $|I \cap J| < k-1$;
\item $(a_{I_1} \cdots a_{I_{k+1}})^2 = 1$ for any $(k+1)$-element set $S = \{i_1,\dots,i_{k+1}\}$, where the product is taken over all $k$-element subsets $I_j \subset S$.
\end{itemize}

\subsection{Transversely nice stratifications}
\label{subsec:transnice}

Let $\mathcal{M}$ be a moduli space equipped with a stratification $\mathcal{S}$. We denote by $\mathcal{S}^{(d)}$ the union of strata of codimension $d$. 

\begin{definition}
The stratification $\mathcal{S}$ is called \emph{transversely nice} if for every stratum $C \in \mathcal{S}^{(2)}$ and for every stratum $V \in \mathcal{S}^{(1)}$ such that $C \subset \overline{V}$, the stratum $V$ occurs exactly twice in a small neighbourhood of $C$, and the two occurrences are from opposite sides.
\end{definition}

Equivalently, in the language of hypergraphs, if codimension-$1$ strata correspond to vertices and codimension-$2$ strata to hyperedges, then every vertex belonging to a hyperedge appears in it exactly twice, with opposite orientations.

\subsection{Configuration spaces of points in the plane}
\label{subsec:config}

Let $\operatorname{Conf}_n(\mathbb{R}^2)$ denote the configuration space of $n$ pairwise distinct points in the Euclidean plane:
\[
\operatorname{Conf}_n(\mathbb{R}^2) = \{ (p_1,\dots,p_n) \in (\mathbb{R}^2)^n : p_i \neq p_j \text{ for } i \neq j \}.
\]
We consider the quotient by the symmetric group:
\[
\mathcal{C}_n = \operatorname{Conf}_n(\mathbb{R}^2) / S_n,
\]
the space of unordered $n$-tuples of distinct points.

For a positive integer $d$, let $\Sigma_d^{(n)} \subset \mathcal{C}_n$ be the set of configurations for which there exists an algebraic curve of degree $d$ passing through all $n$ points. For a generic configuration, the minimal degree of an interpolating curve is determined by the number of points. We are interested in the stratification of $\mathcal{C}_n$ by the minimal degree of an algebraic curve through the points.

\section{The groups $G(\Gamma)$ for arbitrary hypergraphs and the general theorem}
\label{sec:GGamma}

Here we follow \cite{Manturov2026}.

Let $\Gamma = (V, E)$ be a finite hypergraph, where $V$ is a set of vertices and $E$ is a set of hyperedges (subsets of $V$). We associate to $\Gamma$ a group encoding the monodromy when going around codimension-$1$ strata.

\begin{definition}[Group $G(\Gamma)$]
Let each vertex $v \in V$ correspond to a codimension-$1$ stratum, and each hyperedge $e \in E$ correspond to a codimension-$2$ stratum where the vertices in $e$ intersect. The group $G(\Gamma)$ is given by generators:
\[
\{a_v \mid v \in V\},
\]
and relations:
\begin{itemize}
\item $a_v^2 = 1$ for all $v \in V$;
\item $(a_{v_1} \cdots a_{v_m})^2 = 1$ for each hyperedge $e = \{v_1,\dots,v_m\} \in E$;
\item commutation relations $a_u a_v = a_v a_u$ when $u$ and $v$ are not contained together in any hyperedge.
\end{itemize}
\end{definition}

\begin{remark}
In the special case where $\Gamma$ is the complete $k$-uniform hypergraph on $n$ vertices, the group $G(\Gamma)$ recovers the classical group $G_n^k$ (up to minor variations in presentation).
\end{remark}

The following general theorem is proved in \cite{Manturov2026}.

\begin{theorem}[General Theorem, \cite{Manturov2026}]
\label{thm:general}
Let $\mathcal{M}$ be a moduli space with a transversely nice stratification. Then there exists a natural homomorphism
\[
\pi_1(\mathcal{M}, *) \longrightarrow G(\Gamma),
\]
where $\Gamma$ is the hypergraph associated with the stratification.

In particular, for the configuration space of points on the real projective plane $\mathbb{RP}^2$, the stratification by projective constraints (collinearity, conics, etc.) yields a natural homomorphism from the spherical braid group to the corresponding group $G(\Gamma)$.
\end{theorem}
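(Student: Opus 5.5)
The plan is the standard $G_n^k$-style construction: define the map on generic loops, then check invariance under homotopy by a general-position argument.

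\emph{Step 1 (map on generic loops).} Fix a base point $*$ outside all strata of codimension $\geq 1$. By transversality, every class in $\pi_1(\mathcal{M},*)$ has a representative loop $\gamma$ that avoids $\mathcal{S}^{(d)}$ for $d\ge 2$ and meets $\mathcal{S}^{(1)}$ transversally in finitely many points. Reading these crossings in order along $\gamma$, let the successive codimension-$1$ strata be $v_1,\dots,v_N$. Set
\[
\phi(\gamma)=a_{v_1}a_{v_2}\cdots a_{v_N}\in G(\Gamma).
\]
Because the generators are involutions, the direction in which a wall is crossed does not matter.

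\emph{Step 2 (homotopy invariance).} Let $H$ be a homotopy between two generic loops and put it in general position with respect to $\mathcal{S}$. Then $H$ meets codimension-$1$ strata along curves and codimension-$2$ strata in isolated points, and misses all higher strata. As the parameter of $H$ varies, the word $\phi(\gamma_t)$ changes only at finitely many events, which fall into three types.
\begin{itemize}
\item \emph{Tangency with a wall.} The loop passes through a fold of $H^{-1}(V)$: two consecutive letters $a_va_v$ appear or disappear. This is handled by $a_v^2=1$.
\item \emph{Two walls crossed simultaneously at distinct points.} Two letters in the word are interchanged. The strata $u,v$ meet no common codimension-$2$ stratum near this event, so the required commutation is $a_ua_v=a_va_u$. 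This needs care: the relation is imposed only when $u,v$ share no hyperedge. Locally, though, the swap merely relabels the time order of crossings at different points of the loop. So I would instead use the fact that $\phi$ is computed from the cyclic order of intersection points, which is unchanged unless two crossings collide. Collisions happen only at $\mathcal{S}^{(2)}$ (case below) or at folds (case above). In this formulation the commutation relation is needed only when the loop crosses the transverse intersection of two walls that do not form a hyperedge.
\item \emph{Passing through a point $c\in C\in\mathcal{S}^{(2)}$.} Take a small disk $D$ transverse to $C$. Its boundary circle crosses the walls adjacent to $C$ in a cyclic sequence, and the word changes by the conjugate of the word read around $\partial D$.
\end{itemize}

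\emph{Step 3 (the codimension-$2$ relation; main obstacle).} One must show that the word read around $\partial D$ is exactly $(a_{v_1}\cdots a_{v_m})^2$, up to cyclic shift and inversion, where $e=\{v_1,\dots,v_m\}$ is the hyperedge of $C$. Transverse niceness says each $v_i$ meets $\partial D$ exactly twice, from opposite sides. What is still needed is that the two passes occur in the same cyclic order. For this I would use the local model: near $C$, the walls are images of hypersurfaces through a common codimension-$2$ locus, so locally they are lines through the origin in the normal plane. A line through the origin meets a small circle in antipodal points, which forces the word $w\,w$ with $w=a_{v_1}\cdots a_{v_m}$. Establishing this local linear (conical) model from the stated hypotheses is where I expect the real difficulty. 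If it fails, the stated relation must be replaced by the actual cyclic word, since opposite-side occurrences alone do not force the antipodal pattern. The commutation case is the subcase $m=2$ with a non-hyperedge: two transverse lines give $a_ua_va_ua_v=1$, i.e.\ commutation.

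\emph{Step 4 (consequences).} Given Steps 1--3, $\phi$ is well defined on $\pi_1$ and multiplicative under concatenation, hence a homomorphism. For the case of $\mathbb{RP}^2$, I would apply the result to the configuration space with the projective-constraint stratification. The projective strata are semialgebraic and $PGL_3$-invariant, which ensures the local model near generic double points. I would then compose with the map from the spherical braid group to $\pi_1$ of the relevant configuration space.
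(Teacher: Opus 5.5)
Your construction is the one the paper has in mind. Its proof is only a sketch (walls give generators, small loops around codimension-$2$ strata give the hyperedge relations), and it simply asserts that transverse niceness makes the relations consistent. The point you flag in Step~3 is a genuine gap: it cannot be closed from the hypothesis as stated, and the paper does not close it either.

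Under the natural reading of ``exactly twice, from opposite sides'', take $\mathcal M=\mathbb R^2$ with coordinates $(\xi,\eta)$ and walls $u=\{\eta=0\}$, $v=\{\eta=\xi^2\}$, $z=\{\xi=0\}$. They meet only at the origin, so $\Gamma$ has the single hyperedge $\{u,v,z\}$. For three letters every ordering of the hyperedge relation gives the same relation up to conjugation and inversion. Each wall leaves the origin in two opposite directions. Yet the counterclockwise word around a small circle is $a_ua_va_za_va_ua_z$. The homomorphism $G(\Gamma)\to\langle s,r\mid s^2=r^2=1\rangle$ given by $a_u,a_z\mapsto s$, $a_v\mapsto r$ sends this word to $(sr)^2\neq 1$. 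So a null-homotopic loop gets a nontrivial value, and the crossing construction is not well defined.

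What Step~3 really needs is that any two walls through $C$ \emph{alternate} around $\partial D$. Then each of the other $m-1$ letters has exactly one occurrence on each of the two arcs cut out by the two occurrences of a given letter. Hence those occurrences are $m$ positions apart and the word is $ww$. Pairwise transversality of the walls along $C$ (your linear model) guarantees this, but it has to be added as a hypothesis, not derived.

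You also need the relation in the right order. Here $w$ lists the walls in the angular order of their tangent lines, and for $m\ge 4$ the relation genuinely depends on this cyclic order. Moreover, the same set of walls can meet along different components of a codimension-$2$ locus in different orders, e.g.\ four collinear points in different orders along the line. So $G(\Gamma)$ must impose $(a_{v_1}\cdots a_{v_m})^2=1$ for every order that actually occurs, which the paper's definition with unordered hyperedges leaves implicit. With these amendments your Steps~1--3 give a complete proof.

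Two smaller points. First, your closing remark in Step~3 conflates two things. Two walls meeting transversally along a codimension-$2$ stratum already form the hyperedge $\{u,v\}$, whose relation $(a_ua_v)^2=1$ is the commutation. The commutation relations that $G(\Gamma)$ imposes for pairs lying in no common hyperedge are never used; they only pass to a quotient, which is harmless.

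Second, in Step~4 there is no natural map from the spherical braid group to $\pi_1$ of a configuration space of $\mathbb{RP}^2$. The double cover $S^2\to\mathbb{RP}^2$ does not induce a map of configuration spaces, since antipodal points collide, and $PGL_3$-invariance by itself gives no transversality. The workable route, which the paper's sketch also leaves unstated, is to pull the projective constraints back to $S^2\subset\mathbb R^3$. Collinearity becomes $\det(v_i,v_j,v_k)=0$, and the conic condition becomes the vanishing of the $6\times 6$ determinant of quadratic monomials. Then apply the theorem to the configuration space of $S^2$ itself.

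The antipodal loci $v_j=-v_i$ are then additional codimension-$2$ strata contained in every $a_{ijk}$ and every $b_I$ with $I\supset\{i,j\}$, and the alternation condition must be checked there as well. For the walls $a_{ijk}$ it holds at generic points: $\det(v_i,-v_i+\epsilon,v_k)=\det(v_i,\epsilon,v_k)$ is linear in $\epsilon$, and different $k$ give different lines.
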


\begin{proof}
The proof follows the standard monodromy construction and is given in \cite{Manturov2026}. For completeness, we sketch the argument. Each codimension-$1$ stratum gives a generator of $G(\Gamma)$. Going around a codimension-$2$ stratum yields a relation corresponding to the hyperedge. The transverse niceness condition ensures that the relations are well-defined and that the monodromy representation is compatible with the group structure. The case of $\mathbb{RP}^2$ follows by applying the construction to the configuration space of points on the projective plane, where the relevant codimension-$1$ strata are defined by projective incidence conditions.
\end{proof}

\section{The conic case: $G_n^{6,3}$ and transverse niceness}
\label{sec:G63}

We now turn to the geometric example that motivates our study: six points lying on a conic.

\subsection{Definition of $G_n^{6,3}$ via the hypergraph $\Gamma$}

Consider the stratification of $\mathcal C_n$ by the following codimension‑one strata:
\begin{itemize}
\item $a_{ijk}$ for every triple of distinct indices: the three points are collinear;
\item $b_{i_1 i_2 i_3 i_4 i_5 i_6}$ (or simply $b_I$ for a 6‑element set $I$) for every 6‑element subset: the six points lie on a (possibly degenerate) conic.
\end{itemize}

From the classification of codimension‑two strata (see Theorem~\ref{thm:conic} below), we build a hypergraph $\Gamma$ whose vertices are all these strata:
\[
V(\Gamma) = \{ a_{ijk} \mid 1 \le i<j<k \le n \} \cup \{ b_I \mid I \subset \{1,\dots,n\},\; |I|=6 \}.
\]
The hyperedges of $\Gamma$ are the sets of codimension‑one strata that occur on each codimension‑two stratum. These are exactly the following types (the indices are assumed distinct where required):

\begin{itemize}
\item \textbf{Type A:} $a_{ijk} \cap a_{lmn}$ with $|\{i,j,k\}\cap\{l,m,n\}|=1$ gives hyperedge $\{a_{pij}, a_{pkl}\}$.
\item \textbf{Type B:} $a_{ijk} \cap a_{lmn}$ with intersection size $0$ gives $\{a_{ijk}, a_{lmn}, b_{ijklmn}\}$.
\item \textbf{Type C:} $a_{ijk} \cap a_{lmn}$ with intersection size $2$ gives $\{a_{pqi}, a_{pqj}, a_{pij}, a_{qij}, b_{pqijrs}\}$.
\item \textbf{Type D:} $a_{ijk} \cap b_I$ with $\{i,j,k\}\not\subset I$ gives $\{a_{ijk}, b_I\}$.
\item \textbf{Type E:} $b_I \cap b_J$ with $|I\cap J|=4$ gives $\{b_I, b_J\}$.
\item \textbf{Type F:} seven points on one conic: for a 7‑set $S$, hyperedge $\{ b_I \mid I \subset S,\; |I|=6\}$ (size 7).
\end{itemize}

Now define
\[
G_n^{6,3} := G(\Gamma),
\]
where $G(\Gamma)$ is the group from Definition~3.1. Explicitly, it is generated by symbols $a_{ijk}$ and $b_I$ subject to:
\begin{itemize}
\item $a_{ijk}^2 = 1$, $b_I^2 = 1$ for all vertices;
\item $(a_{v_1}\cdots a_{v_m})^2 = 1$ for every hyperedge $\{v_1,\dots,v_m\}$ of the above types;
\item $a_u a_v = a_v a_u$ whenever $u$ and $v$ are not contained together in any hyperedge.
\end{itemize}

\subsection{Proof of transverse niceness for conics}
\label{subsec:transverseproof}

\begin{theorem}[Transverse niceness]
\label{thm:conic}
For any $n \ge 6$, the stratification of $\mathcal C_n$ by the strata $a_{ijk}$ and $b_I$ is transversely nice.
\end{theorem}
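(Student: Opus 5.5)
The plan is to verify the defining condition of transverse niceness locally, one codimension-two stratum at a time, following the list of Types A--F. Fix a generic point $c$ of a codimension-two stratum $C$. Near $c$ we choose a small transverse $2$-disc $D$ meeting $C$ only at $c$. The codimension-one strata that pass through $c$ are the zero loci of smooth real functions on a neighbourhood of $c$: $\det(p_i,p_j,p_k)$ for collinearity, in homogeneous coordinates, and the $6\times 6$ determinant of the Veronese vectors $(x^2,xy,y^2,x,y,1)$ for $b_I$. Transverse niceness then reduces to a single claim. For each $V\supset C$, the restriction $f_V|_D$ vanishes on an embedded arc through $c$, and $f_V|_D$ changes sign across that arc. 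A small circle $\partial D$ then crosses $V$ exactly twice, and the two crossings go from opposite sides. It therefore suffices to show that, at generic $c\in C$, each relevant $f_V$ has nonzero differential, and that its kernel is transverse to $D$ (equivalently, $T_cV\supset T_cC$ and $T_cV\ne T_c\mathcal C_n$).

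The steps, in order, are as follows. First, I show that each defining function has nonvanishing differential at generic points of its zero set. For a collinearity determinant this is clear: move one point off the line. For the conic determinant, I move one of the six points off the conic, using that at a generic point of $b_I$ the conic through the six points is unique and nondegenerate. Second, I identify $C$ locally as a transverse intersection of two of these hypersurfaces. In Types A, B and D the two defining conditions involve point sets that can be perturbed independently, so the differentials are linearly independent. I then check which other strata contain $C$. In Type B, for example, the six points lie on the degenerate conic formed by the two lines, so $b_{ijklmn}$ also vanishes. Its differential is a nonzero combination of the two collinearity differentials, because the product of the two linear forms is the relevant conic. Third, I handle the degenerate coincidences (Types C, E, F) through explicit local models. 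In Type C, four points lying on a line force all four triple conditions. In Type F, seven points on a conic force all seven $b_I$. In these cases $C$ has codimension $2$, and each of the many $f_V$ vanishing on $C$ has a kernel hyperplane containing $T_cC$. Each kernel therefore restricts to a line in the $2$-dimensional normal space $T_c\mathcal C_n/T_cC\cong T_cD$. A line through the origin in a disc is crossed exactly twice by a small circle, at antipodal points and from opposite sides. This gives the required "exactly twice, opposite sides" condition, as long as each $df_V$ is nonzero on the normal space.

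The main obstacle is this last nondegeneracy, namely showing $df_V|_{T_cD}\neq 0$ in the coincident Types C and F. In these cases the conic or line through the points is forced to be special, and a naive derivative might vanish. For Type F I would try an explicit parametrisation first. Fix a nondegenerate conic $Q$ and seven points on it. Normal directions are realised by moving two chosen points $p_1,p_2$ off $Q$. For each $6$-subset $I$, the first-order variation of $f_{b_I}$ is a linear combination of the normal displacements of the points of $I$ that lie off $Q$, with coefficients equal to nonzero minors, since five points on a nondegenerate conic are in general position. For $I$ containing $p_1$ or $p_2$ this variation is nonzero. For the subset $I=S\setminus\{p_1\}$ or $S\setminus\{p_2\}$, only one displaced point is involved, so its variation is still nonzero. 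Distinct $I$ give kernels that are possibly different lines, but each is a genuine line. Type C I would treat the same way, placing the four collinear points on the $x$-axis and computing the linear terms of the determinants directly. Finally, strata of higher multiplicity, such as a line carrying five points, are of codimension at least $3$. They are irrelevant to the condition on codimension-two strata, so the case analysis closes.
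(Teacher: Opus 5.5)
Your proposal is correct and follows essentially the paper's route. You use the same enumeration of Types A--F and the same key point: every codimension-one stratum whose closure contains $C$ is a smooth hypersurface near $C$, so it meets a transverse disc in an arc that a small circle crosses exactly twice, from opposite sides. The paper only asserts this smoothness (``the conic has two branches''), whereas you rightly check $df_V\neq 0$ at points of $C$ rather than at generic points of $V$, and verify it by first-order variations; in Type C, note that for $n>6$ every $b_J$ with $J\supset\{p,q,i,j\}$ contains $C$, and its nonzero variation comes from displacing one of the four collinear points (displacing the two other points of $J$ gives zero).
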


\begin{proof}
We must show that for every codimension‑$2$ stratum $C$ and every codimension‑$1$ stratum $V$ with $C \subset \overline{V}$, the stratum $V$ occurs exactly twice around $C$, from opposite sides.

A codimension‑$2$ stratum arises as a transverse intersection of two or more distinct codimension‑$1$ strata, provided the total number of independent conditions is exactly $2$. We enumerate all such possible intersections. For each, we list all codimension‑$1$ strata that are simultaneously satisfied.

\paragraph{Type A: $a_{ijk} \cap a_{lmn}$ with $|\{i,j,k\}\cap\{l,m,n\}|=1$.}
W.l.o.g. let the common index be $p$, so $T_1=\{p,i,j\}$, $T_2=\{p,k,l\}$. Then points $p,i,j$ lie on line $L_1$, and $p,k,l$ on line $L_2$, with $L_1 \cap L_2 = \{p\}$. No conic is forced generically.  
\emph{Strata present:} $a_{pij},\; a_{pkl}$.

\paragraph{Type B: $a_{ijk} \cap a_{lmn}$ with $|T_1 \cap T_2|=0$.}
All six indices are distinct. Points $i,j,k$ lie on line $L_1$, points $l,m,n$ on line $L_2$. The union $L_1 \cup L_2$ is a degenerate conic, hence $b_{ijklmn}$ is forced.  
\emph{Strata present:} $a_{ijk},\; a_{lmn},\; b_{ijklmn}$.

\paragraph{Type C: $a_{ijk} \cap a_{lmn}$ with $|T_1 \cap T_2|=2$.}
W.l.o.g. $T_1=\{p,q,i\}$, $T_2=\{p,q,j\}$. Then $p,q,i,j$ are collinear on line $L$. Let the remaining two points be $r,s$. Then $b_{pqijrs}$ is forced, and all four collinearity strata for triples among $\{p,q,i,j\}$ are satisfied.  
\emph{Strata present:} $a_{pqi}, a_{pqj}, a_{pij}, a_{qij}, b_{pqijrs}$.

\paragraph{Type D: $a_{ijk} \cap b_I$ with $\{i,j,k\}\not\subset I$.}
The conditions are independent.  
\emph{Strata present:} $a_{ijk},\; b_I$.

\paragraph{Type E: $b_I \cap b_J$ with $|I \cap J| = 4$.}
The two 6‑element sets share exactly 4 indices. The conditions are independent (a conic is determined by 5 points).  
\emph{Strata present:} $b_I,\; b_J$.

\paragraph{Type F: Seven points on one conic.}
Let $S$ be a 7‑element set. The condition that all seven points lie on a non‑degenerate conic has codimension 2. Then every 6‑element subset $I \subset S$ satisfies $b_I$.  
\emph{Strata present:} $b_I$ for all $I \subset S,\; |I|=6$ (seven strata).

\medskip
\noindent
The cases $a_{ijk} \cap b_I$ with $\{i,j,k\} \subset I$ reduce to Types B or C, as discussed in the proof. Intersections with $|I \cap J| = 5$ give Type F (since the union has 7 points). The case $|I \cap J| \ge 6$ gives no new stratum. Configurations with five or more collinear points, or with a double line, have codimension $>2$ and are excluded.

For each of the above types, every listed codimension‑$1$ stratum is a smooth hypersurface in a neighbourhood of the codimension‑$2$ stratum (for $b$-strata this is true because the condition is given by a single non‑degenerate equation; in the degenerate cases B, C, the conic has two branches, and in Type F the seventh point condition is independent). Hence each such stratum has two local branches, approached from opposite sides. This proves transverse niceness.
\end{proof}

\begin{remark}
For small $n$, not all types occur:
\begin{itemize}
\item $n=6$: only Types A, B, C (since D requires a point outside $I$, E requires 8 distinct indices, F requires 7).
\item $n=7$: Types A–D and F occur; E still requires $n\ge 8$.
\item $n\ge 8$: all types A–F occur.
\end{itemize}
\end{remark}

\begin{corollary}
By Theorem~\ref{thm:general}, there exists a natural homomorphism
\[
\pi_1(\mathcal C_n^{\text{strat}}, *) \longrightarrow G_n^{6,3},
\]
where $\mathcal C_n^{\text{strat}}$ is the complement of all strata of codimension $\ge 2$ (i.e., the space of configurations with no collinear triples and no six points on a conic).
\end{corollary}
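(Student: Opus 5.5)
The corollary follows by feeding Theorem~\ref{thm:conic} into Theorem~\ref{thm:general}. The only real work is to check that the hypergraph Theorem~\ref{thm:general} attaches to the stratification is exactly the hypergraph $\Gamma$ used to define $G_n^{6,3}$. One also has to fix a convention for which loops are sent where.

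\textbf{Step 1: the moduli space.} I take $\mathcal M=\mathcal C_n$ with the stratification whose codimension-one strata are the closures of the loci $a_{ijk}$ and $b_I$. The codimension-two strata are the loci of Types A--F in Theorem~\ref{thm:conic}. Everything of codimension $\ge 3$ is discarded.

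By Theorem~\ref{thm:conic}, for $n\ge 6$ this stratification is transversely nice. So Theorem~\ref{thm:general} applies and gives a natural homomorphism $\pi_1(\mathcal M,*)\to G(\Gamma')$, where $\Gamma'$ is the hypergraph of the stratification.

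\textbf{Step 2: identify $\Gamma'$ with $\Gamma$.} By the equivalent formulation of transverse niceness, the vertices of $\Gamma'$ are the codimension-one strata, namely the $a_{ijk}$ and the $b_I$. Its hyperedges are the sets of codimension-one strata passing through each codimension-two stratum. The case analysis in the proof of Theorem~\ref{thm:conic} lists these sets: they are precisely Types A--F. So $\Gamma'=\Gamma$, and the target is $G(\Gamma)=G_n^{6,3}$ by definition.

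The defining relations then match exactly:
\begin{itemize}
\item the involution relations, because each wall is crossed from either side;
\item the hyperedge relations $(a_{v_1}\cdots a_{v_m})^2=1$, because a small loop around a codimension-two stratum crosses each wall in the hyperedge twice, once from each side;
\item the commutation relations for walls sharing no hyperedge, because those walls meet only transversally (or not at all) inside codimension-two strata.
\end{itemize}

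\textbf{Step 3: the domain.} The homomorphism is defined on loops in general position. Such a loop avoids all strata of codimension $\ge 2$ and meets the codimension-one walls transversally; it is recorded as the word of generators it crosses. Homotopies can be put in general position so that they only pass through codimension-two strata, and these contribute exactly the relations above. So the construction lives on the fundamental group of the complement of the codimension-$\ge 2$ strata, which is the space the corollary calls $\mathcal C_n^{\text{strat}}$.

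The main obstacle is reconciling this with the corollary's parenthetical description. That description reads as the complement of all walls, but on that space loops never cross a wall and the map would be trivial. I would interpret $\mathcal C_n^{\text{strat}}$ as the complement of the codimension-$\ge 2$ strata, with the walls themselves kept. I would then check that general-position loops and homotopies there only cross walls transversally and pass through Types A--F. That check is exactly where the completeness of the enumeration in Theorem~\ref{thm:conic} is used.
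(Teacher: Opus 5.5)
Your overall route is the paper's: the corollary is just Theorem~\ref{thm:conic} fed into Theorem~\ref{thm:general}, with the hyperedges read off from Types A--F. The gap is that Step~3 contradicts your resolution of the domain.

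Suppose $\mathcal C_n^{\text{strat}}$ is the complement of the strata of codimension $\ge 2$. Then the Type A--F strata are not in the space, so no homotopy in $\mathcal C_n^{\text{strat}}$ can pass through them. Inside this space the walls are pairwise disjoint closed hypersurfaces. Two crossings of different walls can never change order along a loop, and the only events in a generic homotopy are births and deaths of pairs $a_va_v$. The crossing word is then already an invariant in the free product $\ast_v\,\mathbb{Z}/2$. So the map exists for trivial reasons, and neither the hyperedge relations nor Theorem~\ref{thm:conic} is ever used.

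The mechanism you actually describe belongs to a different space. Loops that avoid codimension $\ge 2$, and generic homotopies that cross codimension-$2$ strata transversally at isolated points, are the argument on the complement of the strata of codimension $\ge 3$. That space has the same $\pi_1$ as the whole configuration space, since deleting real codimension $\ge 3$ does not change $\pi_1$. Its $\pi_1$ is the $\pi_1$ of the codimension-$\ge 2$ complement modulo the normal closure of the meridians of the codimension-$2$ strata. By transverse niceness a meridian reads as the hyperedge word $(a_{v_1}\cdots a_{v_m})^2$, where $v_1,\dots,v_m$ are the walls in the order in which half of a small circle around the stratum meets them. The hyperedge relations are therefore exactly what lets the crossing-word map descend to this quotient. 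That descended homomorphism is the statement you should prove. The map on your $\mathcal C_n^{\text{strat}}$ follows from it by restriction, but that is not where Theorem~\ref{thm:conic} does any work.

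A second gap, inherited from the statement, is in Step~1. The walls $a_{ijk}$ and $b_I$ are indexed by labelled points, so they are hypersurfaces in the ordered space $\operatorname{Conf}_n(\mathbb{R}^2)$, not in $\mathcal C_n$; in the unordered space all the loci $a_{ijk}$ have the same image. Loops in $\mathcal C_n$ can permute the points. For example, a half-twist exchanging $i$ and $j$ rotates the line through them past every other point $k$, so it crosses the walls $a_{ijk}$ while swapping the labels $i$ and $j$. Transporting labels along such loops gives only a map twisted by the induced permutation, at best a homomorphism into a semidirect product of $G_n^{6,3}$ with $S_n$. To land in $G_n^{6,3}$ itself, you should run the whole argument on $\operatorname{Conf}_n(\mathbb{R}^2)$, i.e.\ for pure braids.
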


\section{Further discussion}
\label{sec:further}

We conclude with a discussion of further directions and open problems arising from the transverse niceness property.

\subsection{The hierarchy of stratifications}

Consider the following sequence of codimension-$1$ conditions on configurations of points in the Euclidean plane $\mathbb{R}^2$:

\begin{enumerate}
\item \emph{Three points lie on a straight line:} this is the classical condition giving rise to the group $G_n^3$;
\item \emph{Six points lie on a non-degenerate conic:} this is the condition studied in the present paper, giving rise to the group $G_n^{6,3}$;
\item \emph{Ten points lie on a non-degenerate cubic curve:} this gives rise to the group $G_n^{10,6,3}$;
\item \emph{Fifteen points lie on a non-degenerate quartic curve:} this gives rise to the group $G_n^{15,10,6,3}$;
\item and so on.
\end{enumerate}

The numbers appearing in this hierarchy are precisely the binomial coefficients
\[
\binom{3}{2}=3,\quad \binom{4}{2}=6,\quad \binom{5}{2}=10,\quad \binom{6}{2}=15,\quad \ldots
\]
Indeed, for a curve of degree $d$, the number of points needed to determine it is $\binom{d+2}{2}-1$, which gives the sequence
\[
\binom{3}{2}=3,\quad \binom{4}{2}=6,\quad \binom{5}{2}=10,\quad \binom{6}{2}=15,\quad \ldots
\]
for $d=2,3,4,5,\ldots$

Each of these conditions defines a codimension-$1$ stratum in the corresponding configuration space $\mathcal{C}_N$, where $N = \binom{d+2}{2}-1$. Moreover, these strata naturally form a hierarchy: a configuration lying on a curve of degree $d$ imposes stronger constraints than one lying on a curve of lower degree.

\subsection{From complex to real codimension-one conditions}

A natural and important question arises: given a complex codimension-$1$ condition on configurations of points in $\mathbb{C}^n$ (or on a complex manifold), can one derive a real codimension-$1$ condition by taking the real part, imaginary part, or some other real-valued function? If so, does the resulting real codimension-$1$ condition give rise to a natural graph $\Gamma$, a group $G(\Gamma)$, and a homomorphism from the corresponding fundamental group to $G(\Gamma)$?

This question is motivated by the following classical examples.

\begin{enumerate}
\item \emph{The condition $(z_i - z_j) = 0$.} This is a complex codimension-$1$ condition. Taking the real part gives the real codimension-$1$ condition
\[
\operatorname{Re}(z_i - z_j) = 0,
\]
which means that the points $z_i$ and $z_j$ have the same $x$-coordinate in the plane. This condition defines a codimension-$1$ stratum in the configuration space of $n$ points in $\mathbb{R}^2$. The corresponding group is the classical group $G_n^2$ (or $G(n,2)$), where the vertices of the graph correspond to pairs of points sharing the same $x$-coordinate.

\item \emph{The condition $\operatorname{Im}\left(\frac{z_i - z_j}{z_i - z_k}\right) = 0$.} This is a real codimension-$1$ condition that is not simply the real part of a complex equation. Geometrically, it means that the points $z_i, z_j, z_k$ are collinear in the plane. This is the classical collinearity condition giving rise to the group $G_n^3$.

\item More generally, the condition that six points lie on a conic can be expressed as a real codimension-$1$ condition by taking the real part of the determinant equation defining the conic. This gives rise to the group $G_n^{6,3}$ studied in the present paper.
\end{enumerate}

The general principle can be stated as follows. Suppose we have a complex analytic condition of the form
\[
F(z_1,\dots,z_n) = 0,
\]
where $F$ is a holomorphic function of complex codimension $1$. By taking the real part
\[
\operatorname{Re} F(z_1,\dots,z_n) = 0,
\]
we obtain a real codimension-$1$ condition. Similarly, one could take the imaginary part or any real-valued function derived from $F$.

For each such real codimension-$1$ condition, we can associate a hypergraph $\Gamma$ whose vertices correspond to the elementary conditions (e.g., pairs $(i,j)$ for the condition $\operatorname{Re}(z_i - z_j)=0$, or triples $(i,j,k)$ for collinearity). The hyperedges of $\Gamma$ correspond to the intersections of these conditions, i.e., configurations where several real codimension-$1$ conditions hold simultaneously. If the stratification is transversely nice, Theorem~\ref{thm:general} gives a homomorphism
\[
\pi_1(\mathcal{M}, *) \longrightarrow G(\Gamma).
\]

\subsection{The main open question}

The central question is the following:

\begin{quotation}
\emph{For which complex codimension-$1$ conditions does the corresponding real codimension-$1$ condition define a transversely nice stratification?}
\end{quotation}

In other words, when does the real codimension-$1$ stratum approach the codimension-$2$ strata ``in the right way'' — exactly twice and from opposite sides?

The classical cases are known:
\begin{itemize}
\item The condition $\operatorname{Re}(z_i - z_j)=0$ gives a transversely nice stratification, yielding the group $G_n^2$.
\item The collinearity condition $\operatorname{Im}\left(\frac{z_i - z_j}{z_i - z_k}\right)=0$ gives a transversely nice stratification, yielding the group $G_n^3$.
\item The conic condition (six points on a conic) gives a transversely nice stratification, yielding the group $G_n^{6,3}$, as proved in this paper.
\end{itemize}

Beyond these, we conjecture that for every complex codimension-$1$ condition arising from algebraic geometry, the corresponding real condition (obtained by taking the real part of the defining equation) yields a transversely nice stratification. In particular, the hierarchy of conditions defined by curves of degree $d$:
\[
\text{degree }2 \text{ (conics)},\quad \text{degree }3 \text{ (cubics)},\quad \text{degree }4 \text{ (quartics)},\quad \ldots
\]
should all be transversely nice. This would give a sequence of groups
\[
G_n^3,\quad G_n^{6,3},\quad G_n^{10,6,3},\quad G_n^{15,10,6,3},\quad \ldots
\]
and natural homomorphisms between them.

\subsection{The projective plane and further generalizations}

The entire theory should work not only in the affine plane $\mathbb{R}^2$ but also in the real projective plane $\mathbb{RP}^2$. Indeed, the conditions considered above (collinearity, conics, cubic curves, etc.) are projectively invariant. In the projective setting, projective duality may provide additional insights: for example, the condition that six points lie on a conic is dual to the condition that six lines are tangent to a conic.

Beyond the planar case, one may consider analogous hierarchies for configuration spaces of points on other surfaces or in higher dimensions. For instance, on the sphere $S^2$, one has the theory of spherical braids and projective constraints as initiated in \cite{Manturov2026}. The transversality property in these settings remains to be fully understood.

We hope that the techniques developed in this paper will provide a foundation for addressing these questions in future work.

The author is grateful to Igor
Mikhailovich Nikonov for valuable
discussions.

\bibliographystyle{amsplain}

\end{document}